\theoremstyle{plain}
\newtheorem{thm}{Theorem}
  \theoremstyle{definition}
  \newtheorem{defn}[thm]{Definition}
  \theoremstyle{remark}
  \newtheorem{rem}[thm]{Remark}
  \theoremstyle{plain}
  \newtheorem{lem}[thm]{Lemma}
\begin{document}
\global\long\def\P{\mathsf{P}}
 \global\long\def\Pc#1#2{\mathsf{P}\left\{  #1\,\middle|\,#2\right\}  }
 \global\long\def\Pcsq#1#2{\mathsf{P}\left[#1\,\middle|\,#2\right]}
 \global\long\def\E{\mathsf{E}}
 \global\long\def\Ec#1#2{\mathsf{E}\left[#1\,\middle|\,#2\right]}
 \global\long\def\D{\mathsf{D}}
 \global\long\def\Dc#1#2{\mathsf{D}\left[#1\,\middle|\,#2\right]}
 \global\long\def\I{\mathsf{1}}

\title{On short-time asymptotics of one-dimensional Harris flows}

\author{Alexander Shamov}
\maketitle
\begin{abstract}
We study the short-time asymptotical behavior of stochastic flows
on $\mathbb{R}$ in the $\sup$-norm. The results are stated in terms
of a Gaussian process associated with the covariation of the flow.
In case the Gaussian process has a continuous version the two processes
can be coupled in such a way that the difference is uniformly $o\left(\sqrt{t\ln\ln t^{-1}}\right)$.
In case it has no continuous version, an $O\left(\sqrt{t\ln\ln t^{-1}}\right)$
estimate is obtained under mild regularity assumptions. The main tools
are Gaussian measure concentration and a martingale version of the
Slepian comparison principle.

Keywords: stochastic flows, law of iterated logarithm, Slepian comparison

2010 AMS Math subject classification: 60G17, 60G44
\end{abstract}

\section{Introduction}

In this paper we investigate the asymptotical behaviour of the point
motion of one-dimensional stochastic flows. The term {}``stochastic
flow'' means a family of random maps $\left(X_{s,t}\left(\cdot\right)\right)_{s\le t}$
that satisfies the flow property $X_{t,r}\circ X_{s,t}=X_{s,r}$ and
has independent values on disjoint intervals. What we call the point
motion is the family of maps $X_{0,t}$, which we denote by $X\left(\cdot,t\right)$.
We consider only flows of monotone maps from $\mathbb{R}$ to itself.

The basic example of a stochastic flow is a solution of an SDE regarded
as a function of the initial point. Flows of this kind are known to
exist for SDEs with Lipshitz coefficients, and in this case the maps
$X\left(\cdot,t\right)$ are homeomorphisms or even diffeomorphisms
\cite{Kun}. On the other hand, there are also examples of flows of
discontinuous maps \cite{Dar}, the Arratia flow \cite{Arr} being
historically the first of them and perhaps one of the most important.
The point motion of the Arratia flow is a two-parametric process $\left(X\left(u,t\right)\right)_{u\in\mathbb{R},t\ge0}$
such that for each $u$ the process $X\left(u,\cdot\right)$ is a
Brownian martingale with the following properties:
\begin{enumerate}
\item $X\left(u,0\right)=u$
\item $\frac{d}{dt}\left\langle X\left(u,t\right),X\left(v,t\right)\right\rangle =\I\left\{ X\left(u,t\right)=X\left(v,t\right)\right\} $
\item $X\left(u,t\right)\le X\left(v,t\right)$ for all $u\le v$.
\end{enumerate}
\begin{figure}
\includegraphics[width=1\textwidth]{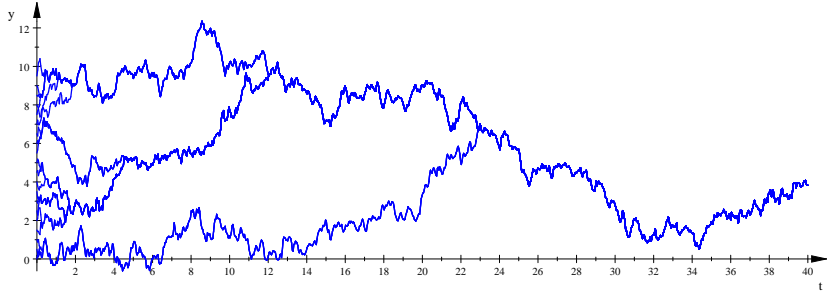}\caption{Point motion of the Arratia flow.\label{fig:ArratiaFlow}}

\end{figure}

Roughly speaking, the Arratia flow consists of Brownian {}``particles''
that evolve independently until they meet, and coalesce thereafter
(Fig. \ref{fig:ArratiaFlow}). It is known that the $X\left(\cdot,t\right)$-image
of any bounded subset of $\mathbb{R}$ is finite for any positive
$t$ due to coalescence \cite{Dor}.

More generally, one can consider so-called Harris flows, defined the
same way except that its {}``infinitesimal covariation function''
may be an arbitrary real positive definite function:\[
\frac{d}{dt}\left\langle X\left(u,t\right),X\left(v,t\right)\right\rangle =\varphi\left(X\left(u,t\right)-X\left(v,t\right)\right).\]
We assume that $\varphi\left(0\right)=1$ for convenience. Furthermore,
we assume that $\left|\varphi\left(x\right)\right|<1$ for $x\neq0$,
which excludes a possibility for periodic flows, regarded more naturally
as flows on the circle. However, taking them into account would lead
to no serious complications.

We study the asymptotical behaviour of \begin{equation}
\sup_{u\in\left[0,1\right]}\left|X\left(u,t\right)-u\right|\label{eq:Sup}\end{equation}
for small $t$. The main approach is to compare $X\left(u,t\right)$
to a family of Gaussian martingales $\left(Y\left(u,t\right)\right)$
which we call a {}``tangent process'', defined by the following
properties:\[
Y\left(u,0\right)=u,\]
\[
\frac{d}{dt}\left\langle X\left(u,t\right),Y\left(v,t\right)\right\rangle =\varphi\left(X\left(u,t\right)-v\right),\]
\[
\frac{d}{dt}\left\langle Y\left(u,t\right),Y\left(v,t\right)\right\rangle =\varphi\left(u-v\right).\]
Note that if $\varphi$ is continuous, then for any fixed $u$ the
quadratic variation of $X\left(u,\cdot\right)-Y\left(u,\cdot\right)$
satisfies\[
\frac{d}{dt}\left\langle X\left(u,t\right)-Y\left(u,t\right)\right\rangle |_{t=0}=0.\]
Since $X\left(u,t\right)-Y\left(u,t\right)$ is a time-changed Brownian
motion \cite{Kal}, one can easily deduce from the law of iterated
logarithm that $\left|X\left(u,t\right)-Y\left(u,t\right)\right|=o\left(\sqrt{t\ln\ln t^{-1}}\right)$
as $t\to0$. It turns out that if $Y$ has a modification that is
continuous w.r.t. both variables then this holds uniformly in $u$.
Namely,\[
\sup_{u\in\left[0,1\right]}\left|X\left(u,t\right)-Y\left(u,t\right)\right|=o\left(\sqrt{t\ln\ln t^{-1}}\right).\]
Together with the law of iterated logarithm for the Gaussian process
$Y$ this yields\[
\limsup_{t\to0}\frac{\sup_{u\in\left[0,1\right]}\left|X\left(u,t\right)-u\right|}{\sqrt{2t\ln\ln t^{-1}}}=1.\]

In Section \ref{sec:MainResult} we consider the case when the {}``tangent
process'' has no continuous modification, which may happen if the
covariation function is not smooth enough at zero. In this case we
compare $X$ and $Y$ in distribution and obtain the following result:\[
\sup_{u\in\left[0,1\right]}\left|X\left(u,t\right)-u\right|-\E\sup_{0\le k<t^{-1/2}}\left|Y\left(kt^{1/2},t\right)-kt^{1/2}\right|=O\left(\sqrt{t\ln\ln t^{-1}}\right).\]
The main tool used there is a martingale version of the Slepian comparison
inequality, well-known in the theory of Gaussian processes \cite{Lif}.
The comparison inequality is stated and proved in Appendix (Theorem
\ref{thm:Comparison}).

The paper is organized as follows. In Section \ref{sec:Existence}
we give basic definitions and state an existence theorem for Harris
flows. In Section \ref{sec:UpperBound} we give a universal $O\left(\sqrt{t\ln t^{-1}}\right)$
upper bound of \eqref{eq:Sup} for monotone families of Brownian motions,
which is used later. In Sections \ref{sec:ContinuousCase} and \ref{sec:MainResult}
we prove our main results for the flows with continuous and discontinuous
tangent processes, respectively. In Appendix we prove the martingale
comparison theorem and a classical result concerning concentration
of measure that is needed in Section \ref{sec:MainResult}.

\section{An existence result \label{sec:Existence}}
\begin{defn}
The point motion of a Harris flow is a family $\left(X\left(u,t\right)\right)_{u\in\mathbb{R},t\ge0}$
of continuous martingales adapted to a common filtration $\left(\mathcal{F}_{t}\right)$,
satisfying the following conditions:
\begin{enumerate}
\item For each $u$ $X\left(u,\cdot\right)$ is an $\mathcal{F}_{t}$-Brownian
motion starting at $u$.
\item For each $u,v$ the joint covariation of $\left(X\left(u,\cdot\right)\right)$
and $\left(X\left(v,\cdot\right)\right)$ is given by\[
\frac{d}{dt}\left\langle X\left(u,t\right),X\left(v,t\right)\right\rangle =\varphi\left(X\left(u,t\right)-X\left(v,t\right)\right),\]
where $\left\langle \cdot,\cdot\right\rangle $ is quadratic covariation,
and $\varphi$ is a positive definite function. \label{enu:Covariation}
\item $\left(X\left(\cdot,t\right)\right)$ is monotone in $u$ for each
$t$, and $\varphi$ is aperiodic. \label{enu:Monotonicity}
\end{enumerate}
\end{defn}
\begin{rem}
Note that condition \ref{enu:Monotonicity} makes the Brownian motions
coalesce once they hit each other.
\end{rem}

\begin{rem}
Once and for all, by $X$ we denote a modification that is separable
and continuous in $t$ for each $u$.
\end{rem}
The following existence resutlt is given in \cite{Har}.
\begin{thm}
The Harris flow exists provided that $\varphi$ is Lipshitz outside
each interval $\left(-c,c\right)$ and its spectral distribution is
not of pure jump type.
\end{thm}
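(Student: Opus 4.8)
The plan is to construct the point motion $(X(u,t))_{u\in\mathbb R,\,t\ge 0}$ as the projective limit of its finite ``$n$-point motions'' and to read the three defining properties off the construction. Fix $u_{1}<\dots<u_{n}$; on the closed chamber $D_{n}=\{x\in\mathbb R^{n}:x_{1}\le\dots\le x_{n}\}$ consider the martingale problem for the degenerate elliptic operator
\[
L_{n}f(x)=\tfrac12\sum_{i,j=1}^{n}\varphi(x_{i}-x_{j})\,\partial_{i}\partial_{j}f(x).
\]
Its diffusion matrix $a(x)=(\varphi(x_{i}-x_{j}))_{i,j}$ is symmetric, bounded by $1$ in norm, positive semidefinite because $\varphi$ is positive definite, and continuous because $\varphi$ is; so the Stroock--Varadhan theory yields a solution $X^{(n)}=(X_{1},\dots,X_{n})$ started at $(u_{1},\dots,u_{n})$ --- concretely one mollifies $a$ to a smooth uniformly positive $a_{\varepsilon}$, solves the resulting non-degenerate SDE, and extracts a weak limit as $\varepsilon\to 0$, the limit living on $C([0,\infty);\mathbb R^{n})$ by the usual Kolmogorov tightness bound. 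By construction each $X_{i}$ is an $\mathcal F_{t}$-Brownian motion (since $\varphi(0)=1$) and $\langle X_{i},X_{j}\rangle_{t}=\int_{0}^{t}\varphi(X_{i}(s)-X_{j}(s))\,ds$, so conditions (1) and (2) hold on this finite subfamily.

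The delicate point, and the one on which the hypotheses on $\varphi$ are spent, is to force this $n$-point motion to live in $D_{n}$ --- order preservation and coalescence of adjacent particles. For $\Delta:=X_{i+1}-X_{i}$ one computes $\langle\Delta\rangle_{t}=\int_{0}^{t}2\bigl(1-\varphi(\Delta_{s})\bigr)\,ds$, a clock that is strictly increasing off $\{\Delta=0\}$ and frozen on it; aperiodicity ($|\varphi|<1$ away from $0$) is exactly what makes the ``strictly increasing off'' part work, so $\{\Delta=0\}$ is invariant for the time-changed Brownian motion $\Delta$. What remains is a one-dimensional boundary-classification problem for $d\Delta=\sqrt{2(1-\varphi(\Delta))}\,d\beta$ in the sense of Feller: that $\Delta$ does not change sign, i.e.\ that $0$ is absorbing rather than reflecting or oscillating. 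Here the Lipschitz regularity of $\varphi$ outside every interval $(-c,c)$ controls the coefficient away from $0$, while the assumption that the spectral measure is not purely atomic excludes the almost-periodic degeneracy of $1-\varphi$ and keeps $L_{n}$ non-degenerate off the diagonal; together they let one run a Stroock--Varadhan type well-posedness argument for $L_{n}$ on $D_{n}$ with the diagonal hyperplanes absorbing. I expect this interplay --- the degeneracy of $a$ along the diagonal versus well-posedness of the $n$-point problem --- to be the main obstacle. A convenient way to organise it is to approximate $\varphi$ by positive definite Lipschitz $\varphi_{\varepsilon}$ with $\varphi_{\varepsilon}(0)=1$ (for instance by convolving with a narrow Gaussian and renormalising): for Lipschitz $\varphi_{\varepsilon}$ the $n$-point SDE is driven by a spatially Lipschitz white-noise field and has a unique strong solution, which is an order-preserving flow of homeomorphisms by the classical theory \cite{Kun}, and order preservation is a closed condition that survives the passage to the limit $\varepsilon\to 0$.

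Finally I would glue the $n$-point motions. Well-posedness of the $n$-point martingale problem makes the laws of $(X(u_{1},\cdot),\dots,X(u_{n},\cdot))$ consistent under permutation and deletion of coordinates, so Kolmogorov's extension theorem produces a process $(X(q,\cdot))_{q\in\mathbb Q}$ carrying all two-point covariations of the prescribed form, with each $X(q,\cdot)$ a Brownian motion and with $q\mapsto X(q,t)$ nondecreasing. Because a monotone family of continuous paths has right limits, I can set $X(u,t):=\lim_{\mathbb Q\ni q\downarrow u}X(q,t)$ for every $u\in\mathbb R$; since $\varphi$ is bounded and continuous and the convergence $X(q,\cdot)\to X(u,\cdot)$ is monotone, hence locally uniform in $t$ by Dini, a routine uniform-integrability argument transfers the martingale relations for $X(q,t)^{2}-t$ and for $X(q,t)X(v,t)-\int_{0}^{t}\varphi(X(q,s)-X(v,s))\,ds$ to the limit, giving (1) and (2) for the extended family, while (3) is immediate from monotonicity and the aperiodicity of $\varphi$. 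One then passes to a separable, $t$-continuous modification. The two genuinely hard inputs are thus the diagonal boundary analysis of the second step and the uniqueness needed for consistency here --- and these are precisely what the Lipschitz-off-the-origin and non-pure-point spectral hypotheses are designed to supply.
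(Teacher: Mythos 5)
The paper does not prove this theorem: it is cited verbatim from Harris's 1984 paper \cite{Har}, so there is no in-text proof against which to match your argument, and the review below assesses your sketch on its own terms.

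Your high-level plan --- build $n$-point motions, check consistency, invoke Kolmogorov extension, then extend from rationals to all of $\mathbb R$ by monotone limits --- is the standard architecture for this kind of result and is sound as far as it goes. The genuine gap is that you identify but do not supply the key ingredient: well-posedness (in particular uniqueness in law) of the degenerate $n$-point martingale problem. Existence via Stroock--Varadhan compactness is cheap; but the Kolmogorov-consistency step requires that the law of $(X(u_{i_1},\cdot),\dots,X(u_{i_m},\cdot))$ be the same whether obtained directly or as a marginal of a larger system, and that is a uniqueness statement which your mollify-and-take-weak-limits construction does not give for free. Your own last sentence concedes this. Without that step the construction is not a proof, and it is exactly there that the two hypotheses are spent in Harris's argument.

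Two of your specific explanations of the hypotheses are also off target. First, you assert that $0$ is absorbing for $\Delta = X_{i+1}-X_i$ as a consequence of aperiodicity, and frame order preservation as a Feller boundary classification making $0$ an exit boundary. In fact both regimes occur: for $\varphi$ that is $C^2$ at $0$ one has $1-\varphi(x)\asymp x^2$, the boundary $0$ is inaccessible, and the flow never coalesces; coalescence happens only when $1-\varphi$ is rough enough at $0$ (compare the footnote in Section~4 of the paper, which states exactly this dichotomy following \cite{Mat}). The hypotheses are not ``designed to make $0$ absorbing''. Aperiodicity is a standing assumption of the paper (it defines ``flow on $\mathbb R$'' rather than ``flow on the circle''), not one of the two hypotheses you are asked to use. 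Second, you say the non-pure-jump spectral assumption ``keeps $L_n$ non-degenerate off the diagonal''. That is not what it does: a pure-point spectral measure with infinite support already gives a strictly positive definite Gram matrix $\bigl(\varphi(x_i-x_j)\bigr)_{i,j}$ for distinct $x_i$, so non-degeneracy off the diagonal is not the issue. In Harris's proof the spectral condition enters through the uniqueness/regularity analysis of the $n$-point system, not through strict ellipticity. So while the scaffolding of your proposal is reasonable, the load-bearing step is missing and the stated role of both hypotheses is misattributed.

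\end{document}
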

In the sequel we will need not only $X$ itself, but also a Gaussian
process $\left(Y\left(u,t\right)\right)$ starting at $u$ with joint
covariation given below:\[
\frac{d}{dt}\left\langle Y\left(u,t\right),Y\left(v,t\right)\right\rangle =\varphi\left(u-v\right),\]
\begin{equation}
\frac{d}{dt}\left\langle X\left(u,t\right),Y\left(v,t\right)\right\rangle =\varphi\left(X\left(u,t\right)-v\right).\label{eq:DefY}\end{equation}
It admits a construction of the following kind:\[
Y\left(u,t\right)=u+\sum_{i}\intop_{0}^{t}a_{i}\left(u,s\right)dX\left(v_{i},s\right)+\sum_{j}\intop_{0}^{t}b_{j}\left(u,s\right)dW_{j}\left(s\right),\]
where $\left\{ v_{i}\right\} $ is a countable dense subset of $\mathbb{R}$,
$W_{j}$ are independent Brownian motions that are also independent
of $X$, $a$ and $b$ are adapted to the filtration generated by
$X$ and $W$. It is easy to show that $a_{i}$ and $b_{j}$ can be
chosen in such a way that the covariation satisfies \eqref{eq:DefY}.
However, it is not unique, since the construction involves additional
randomization.

\section{An upper bound \label{sec:UpperBound}}

An important special case of a Harris flow is the Arratia flow (Fig.
\ref{fig:ArratiaFlow}). Its covariation function $\varphi$ is given
by $\varphi\left(0\right)=1$ and $\varphi=0$ elsewhere. Thus the
{}``particles'' $X\left(u,\cdot\right)$ move independently until
they coalesce. It follows from our results that the point motion of
the Arratia flow has the following asymptotics in the $\sup$-norm:
\begin{equation}
\sup_{u\in\left[0,1\right]}\left|X\left(u,t\right)-u\right|\sim\sqrt{t\ln t^{-1}},t\to0.\label{eq:TLogT}\end{equation}
Now we will see that the Arratia flow is in some sense the {}``extreme
case''. Namely, for any Harris flow (and in fact for any monotone
family of Brownian motions) an inequality in \eqref{eq:TLogT} holds.
\begin{thm}
For any Harris flow $X$ with $\varphi\left(0\right)=1$ one has\[
\limsup_{t\to0}\sup_{u\in\left[0,1\right]}\frac{\left|X\left(u,t\right)-u\right|}{\sqrt{t\ln t^{-1}}}\le1.\]
\label{thm:UpperBound}\end{thm}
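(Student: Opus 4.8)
The plan is to reduce the uniform bound to a bound at finitely many points by exploiting monotonicity, and then apply the scalar law of iterated logarithm (really, the Gaussian tail bound) to each of these points. Fix a small $t>0$ and choose a mesh of points $0=u_0<u_1<\dots<u_N=1$ with $N\approx t^{-1/2}$, so that consecutive mesh points are at distance $\approx t^{1/2}$. For $u\in[u_k,u_{k+1}]$, monotonicity of $X(\cdot,t)$ gives $X(u_k,t)\le X(u,t)\le X(u_{k+1},t)$, hence
\[
X(u,t)-u \le X(u_{k+1},t)-u_{k+1}+(u_{k+1}-u) \le \max_{0\le j\le N}\bigl(X(u_j,t)-u_j\bigr)+t^{1/2},
\]
and symmetrically from below. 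Since $t^{1/2}=o(\sqrt{t\ln t^{-1}})$, the mesh-spacing error is negligible, and it suffices to control $\max_{0\le j\le N}|X(u_j,t)-u_j|$.

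Next I would use that each $X(u_j,\cdot)-u_j$ is a martingale with $\langle X(u_j,\cdot)-u_j\rangle_t = \int_0^t\varphi(0)\,ds = t$ (using $\varphi(0)=1$), so $X(u_j,t)-u_j$ is \emph{exactly} a time-$t$ value of a Brownian motion, i.e. distributed as $\sqrt t\,\xi_j$ with $\xi_j$ standard Gaussian (the joint law across $j$ is irrelevant). By the Gaussian tail estimate $\P\{|\xi|>\lambda\}\le e^{-\lambda^2/2}$ and a union bound over the $N\approx t^{-1/2}$ points,
\[
\P\Bigl\{\max_{0\le j\le N}|X(u_j,t)-u_j|>\sqrt t\,\lambda\Bigr\}\le 2N e^{-\lambda^2/2}.
\]
Taking $\lambda = \sqrt{(1+\varepsilon)\ln t^{-1}}$ makes the right-hand side $O(t^{-1/2}\cdot t^{(1+\varepsilon)/2})=O(t^{\varepsilon/2})\to0$; summing along a geometric sequence $t_n=\rho^n$ and using a standard monotonicity interpolation between $t_{n+1}$ and $t_n$ (the sup in $u$ and the relevant quantities being monotone enough in $t$ over such a short window, up to harmless constants) lets me invoke Borel--Cantelli to get $\limsup_{t\to0}\sup_u|X(u,t)-u|/\sqrt{t\ln t^{-1}}\le\sqrt{1+\varepsilon}$ almost surely, and then let $\varepsilon\downarrow0$.

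The main obstacle is the passage from the discrete sequence $t_n\to0$ to the continuous limit $t\to0$: between two consecutive scales one must bound $\sup_{t\in[t_{n+1},t_n]}\sup_u|X(u,t)-u|$, and $X(u,\cdot)$ is not monotone in $t$. The fix is to note $\sup_{s\le t_n}|X(u,s)-u|$ \emph{is} monotone in the time variable and, by the reflection principle / Doob's maximal inequality applied to each Brownian motion $X(u_j,\cdot)-u_j$, its tail is only a constant factor worse than that of $|X(u_j,t_n)-u_j|$; after reapplying the monotonicity-in-$u$ mesh argument at scale $t_n$ this costs only a constant in front of the exponential, which is absorbed by choosing $\rho$ close to $1$ and $\varepsilon>0$. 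A secondary point to handle carefully is that the number of mesh points should be taken as $N=\lceil t^{-1/2}\rceil$ and the spacing bounded by $t^{1/2}$ uniformly, so the deterministic error term is genuinely $o(\sqrt{t\ln t^{-1}})$ with no hidden dependence on the realization.
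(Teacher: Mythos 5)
Your proposal is correct and follows essentially the same route as the paper: a mesh of $\approx t^{-1/2}$ points spaced $t^{1/2}$ apart, monotonicity in $u$ to interpolate, Gaussian/reflection-principle tail bounds plus a union bound over the mesh, Borel--Cantelli along $t_n=\rho^n$, and the usual maximal-inequality argument to pass from the discrete time scales to all $t\to0$. The only organizational difference is that the paper applies the reflection-principle bound to $\sup_{s\le t_n}|X(u_{nk},s)-u_{nk}|$ from the outset, whereas you first state the endpoint bound and then patch in the running supremum; the substance is the same.
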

\begin{proof}
First let's prove the inequality for an increasing number of points
$u_{nk}=kt_{n}^{1/2}$, where $t_{n}=q^{n},0<q<1$.\begin{multline*}
\sum_{n}\P\left\{ \sup_{0\le k\le t_{n}^{-1/2}}\sup_{s\le t_{n}}\left|X\left(u_{nk},s\right)-u_{nk}\right|\ge\sqrt{\left(1+\varepsilon\right)t_{n}\ln t_{n}^{-1}}\right\} \le\\
\le\sum_{n}\left\lceil t_{n}^{-1/2}\right\rceil \P\left\{ \sup_{s\le t_{n}}\left|X\left(u_{n0},s\right)-u_{n0}\right|\ge\sqrt{\left(1+\varepsilon\right)t_{n}\ln t_{n}^{-1}}\right\} \le\\
\le\mathrm{const}\cdot\sum_{n}t_{n}^{-1/2}\exp\left[-\frac{1}{2}\left(1+\varepsilon\right)\ln t_{n}^{-1}\right]=\mathrm{const}\cdot\sum_{n}q^{n\varepsilon/2}<+\infty.\end{multline*}
The Borel-Cantelli lemma implies\[
\limsup_{n\to\infty}\sup_{k}\sup_{s\le t_{n}}\frac{\left|X\left(u_{nk},s\right)-u_{nk}\right|}{\sqrt{t_{n}\ln t_{n}^{-1}}}\le1.\]
Now let $u$ be an arbitrary point from $\left[0,1\right]$, and let
$k$ be such that $u_{nk}\le u\le u_{n,k+1}$ for a fixed $t_{n}$.
Using the monotonicity property, we obtain\begin{multline*}
\left|X\left(u,s\right)-u\right|\le\left|X\left(u_{nk},s\right)-u\right|\vee\left|X\left(u_{n,k+1},s\right)-u\right|\le\\
\le\left|X\left(u_{nk},s\right)-u_{nk}\right|\vee\left|X\left(u_{n,k+1},s\right)-u_{n,k+1}\right|+\sqrt{t_{n}}.\end{multline*}
Thus,\[
\limsup_{n\to\infty}\sup_{u\in\left[0,1\right]}\sup_{s\le t_{n}}\frac{\left|X\left(u,s\right)-u\right|}{\sqrt{t_{n}\ln t_{n}^{-1}}}\le1.\]
Now by taking $q$ close enough to $1$ we prove the statement. The
argument is basically the same as in the proof of the law of iterated
logarithm. Namely, let $q$ be such that $\sqrt{q^{n}\ln q^{-n}}\ge\left(1+\varepsilon\right)\sqrt{q^{n+1}\ln q^{-n-1}}$
for sufficiently large $n$. Then since $\sqrt{t\ln t^{-1}}$ is monotone
for small $t$, we obtain\[
\limsup_{t\to0}\sup_{u\in\left[0,1\right]}\frac{\left|X\left(u,t\right)-u\right|}{\sqrt{t\ln t^{-1}}}\le\left(1+\varepsilon\right)\limsup_{t\to0}\sup_{u\in\left[0,1\right]}\frac{\left|X\left(u,t\right)-u\right|}{\sqrt{t_{n}\ln t_{n}^{-1}}}\le1+\varepsilon,\]
where $t_{n}=q^{n}$ is such that $q^{n+1}\le t\le q^{n}$.
\end{proof}

\section{The continuous case \label{sec:ContinuousCase}}

In this paper we estimate the asymptotics of $X$ by comparing it
to the process which we denote $Y$, defined by \eqref{eq:DefY}.
It is a Gaussian process, stationary in $u\in\mathbb{R}$, and also
a Brownian motion in $t$, in the sense that its increments are stationary
and independent. In this section we consider the case when it has
a continuous modification. Note that continuity w.r.t. both variables
follows easily from continuity of $Y\left(\cdot,1\right)$. Indeed,
when restricted to $u\in\left[0,1\right]$ the process becomes a $C\left[0,1\right]$-valued
Brownian motion for which Kolmogorov's continuity criterion is applicable.

A well-known result of the theory of Gaussian processes states that
a stationary Gaussian process has a continuous (or, equivalently,
bounded) modification iff its Dudley integral converges \cite{Lif}.
In our case this is equivalent to\begin{equation}
\intop_{0+}\left|\ln\lambda\left\{ x\,\middle|\,\varphi\left(x\right)\ge1-u^{2}\right\} \right|^{1/2}du<+\infty,\label{eq:ContinityY}\end{equation}
where $\lambda$ is the Lebesgue measure on $\left[0,1\right]$. Note
that continuity of $Y$ does not imply continuity of $X$ %
\footnote{Actually, $X$ is either coalescing or continuous \cite{Mat}, depending
on whether \[
\intop_{0}^{\varepsilon}\frac{x\, dx}{1-\varphi\left(x\right)}\]
is finite. Thus $\varphi\left(x\right)=e^{-\left|x\right|^{\alpha}},0<\alpha<2$
provides an example when $Y$ is continuous but $X$ is not.%
}. Nevertheless, the following result shows that $X$ is close to $Y$
in the $\sup$-norm.
\begin{thm}
Assuming that $Y$ has a continuous modification,\[
\sup_{u\in\left[0,1\right]}\left|X\left(u,t\right)-Y\left(u,t\right)\right|=o\left(\sqrt{t\ln\ln t^{-1}}\right).\]
\label{thm:ContinuousCase}\end{thm}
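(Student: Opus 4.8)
The plan is to control $Z(u,t) := X(u,t) - Y(u,t)$ uniformly in $u$ by first establishing an almost-sure estimate along a dyadic grid in both variables and then filling in the gaps using a combination of the quadratic-variation structure of $Z$ and the modulus of continuity of $Y$. The starting observation is that for each fixed $u$ the process $Z(u,\cdot)$ is a continuous martingale (difference of two continuous martingales on the common filtration), hence a time-changed Brownian motion $Z(u,t) = B_u(\langle Z(u,\cdot)\rangle_t)$ by the Dambis--Dubins--Schwarz theorem \cite{Kal}. Its quadratic variation can be computed from the defining covariations: writing things out,
\[
\frac{d}{dt}\langle Z(u,t)\rangle = \frac{d}{dt}\langle X(u,t)\rangle - 2\frac{d}{dt}\langle X(u,t),Y(u,t)\rangle + \frac{d}{dt}\langle Y(u,t)\rangle = 2 - 2\varphi\bigl(X(u,t)-u\bigr),
\]
which vanishes at $t=0$ and, since $\varphi$ is continuous, is bounded by a deterministic modulus $\psi\bigl(\sup_{s\le t}|X(u,s)-u|\bigr)$ with $\psi(0+)=0$. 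Combined with Theorem \ref{thm:UpperBound}, which gives $\sup_{u\in[0,1]}|X(u,s)-u| = O(\sqrt{s\ln s^{-1}})$ uniformly, this forces $\langle Z(u,\cdot)\rangle_t = o(t)$ uniformly in $u\in[0,1]$, say $\langle Z(u,\cdot)\rangle_t \le t\,\varepsilon(t)$ with $\varepsilon(t)\to 0$ a deterministic function.

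Next I would set up the discretization. Fix a geometric sequence $t_n = q^n$ and grid points $u_{nk} = k t_n^{1/2}$, $0\le k\le t_n^{-1/2}$, exactly as in the proof of Theorem \ref{thm:UpperBound}. For each $(n,k)$ the random variable $\sup_{s\le t_n}|Z(u_{nk},s)|$ is, by the time change and the bound $\langle Z(u_{nk},\cdot)\rangle_{t_n}\le t_n\varepsilon(t_n)$, stochastically dominated by $\sup_{s\le t_n\varepsilon(t_n)}|B(s)|$ for a standard Brownian motion $B$; the reflection principle gives a Gaussian tail $\P\{\sup_{s\le t_n}|Z(u_{nk},s)|\ge \delta\} \le 4\exp(-\delta^2/(2t_n\varepsilon(t_n)))$. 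Taking $\delta = \delta_n := \sqrt{c\, t_n\varepsilon(t_n)\ln t_n^{-1}}$ for a suitable constant $c$, a union bound over the $\lceil t_n^{-1/2}\rceil$ grid points and then over $n$ yields a convergent series, so by Borel--Cantelli,
\[
\limsup_{n\to\infty}\ \sup_k\ \sup_{s\le t_n}\ \frac{|Z(u_{nk},s)|}{\sqrt{t_n\varepsilon(t_n)\ln t_n^{-1}}}\ \le\ \sqrt{c}\qquad\text{a.s.}
\]
Since $\varepsilon(t_n)\to 0$, the right-hand numerator is $o(\sqrt{t_n\ln\ln t_n^{-1}})$ provided we keep $q$ close to $1$ so that $\ln t_n^{-1}$ and $\ln\ln t^{-1}$ are comparable along the geometric sequence — the same interpolation-in-$n$ trick used at the end of Theorem \ref{thm:UpperBound} passes from the sequence $t_n$ to all small $t$.

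It remains to pass from the grid $\{u_{nk}\}$ to all $u\in[0,1]$. Here, unlike in Theorem \ref{thm:UpperBound}, monotonicity of $Z$ in $u$ is \emph{not} available, so I instead use the triangle inequality $|Z(u,s)| \le |Z(u_{nk},s)| + |X(u,s)-X(u_{nk},s)| + |Y(u,s)-Y(u_{nk},s)|$ for the nearest grid point $u_{nk}$ (distance $\le t_n^{1/2}$). For the $X$-term, monotonicity of $X(\cdot,s)$ plus Theorem \ref{thm:UpperBound} bounds $|X(u,s)-X(u_{nk},s)|$ by roughly $t_n^{1/2} + O(\sqrt{t_n\ln t_n^{-1}})$, which after the $\sqrt{t_n\ln\ln t_n^{-1}}$ normalization is harmless once $q$ is close to $1$; more carefully, one refines the grid spacing to $t_n^{1/2}\cdot\eta(t_n)$ with $\eta\to 0$ slowly, at the negligible cost of $\eta(t_n)^{-1}$ extra grid points in the union bound. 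For the $Y$-term I invoke the assumed continuity of $Y$: restricted to $[0,1]$, $Y(\cdot,s)$ is a $C[0,1]$-valued Brownian motion, so $\sup_{|u-v|\le h,\ s\le t}|Y(u,s)-Y(v,s)|$ has the same law as $\sqrt{t}\,\omega_Y(h)$ where $\omega_Y$ is the (random, a.s.\ vanishing at $0$) modulus of continuity of $Y(\cdot,1)$; hence this term is $o(\sqrt{t_n})$ uniformly along the grid scales, again negligible after normalization. \textbf{The main obstacle} I anticipate is making the last two estimates \emph{simultaneously} uniform: the $X$-increment bound wants a coarse grid (few points, for the union bound) while the martingale estimate on $Z(u_{nk},\cdot)$ is insensitive to grid fineness, and the $Y$-modulus term wants the grid fine — balancing the grid spacing $t_n^{1/2}\eta(t_n)$ against the three error terms, and checking that the resulting rate still beats $\sqrt{t\ln\ln t^{-1}}$ for \emph{every} choice forced on us, is the delicate bookkeeping. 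Everything else is a routine assembly of Borel--Cantelli, the reflection principle, and the geometric-subsequence interpolation.
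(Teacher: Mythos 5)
There are two genuine gaps, and they are linked: your choice of a fine grid ($t_n^{-1/2}$ points at spacing $t_n^{1/2}$) forces both, whereas the paper resolves everything by going in the opposite direction and taking the grid \emph{coarse}.

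\textbf{First gap.} With $t_n^{-1/2}$ grid points, the union bound multiplies the single-point tail by $q^{-n/2}$, which grows exponentially in $n$, so the Borel--Cantelli threshold is forced up to $\delta_n\sim\sqrt{c\,t_n\varepsilon(t_n)\ln t_n^{-1}}$ with $c>1$, exactly as you compute. You then assert that this is $o(\sqrt{t_n\ln\ln t_n^{-1}})$ because $\varepsilon(t_n)\to 0$, and you suggest that taking $q$ near $1$ makes $\ln t_n^{-1}$ and $\ln\ln t_n^{-1}$ comparable. Neither step holds. The needed estimate $\varepsilon(t_n)\ln t_n^{-1}=o(\ln\ln t_n^{-1})$ is a quantitative rate, namely $\varepsilon(t)=o(\ln\ln t^{-1}/\ln t^{-1})$, which $\varepsilon(t)\to 0$ alone does not give; and taking $q$ near $1$ cannot help, since $\ln t_n^{-1}\sim n\ln q^{-1}$ grows linearly in $n$ while $\ln\ln t_n^{-1}\sim\ln n$ grows logarithmically, for every $q\in(0,1)$.

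\textbf{Second gap.} Even granting the grid estimate, your interpolation step bounds $|Z(u,s)|$ by $|Z(u_{nk},s)|+|X(u,s)-X(u_{nk},s)|+|Y(u,s)-Y(u_{nk},s)|$ and controls the middle term via Theorem \ref{thm:UpperBound}. But that theorem only gives $|X(u,s)-X(u_{nk},s)|=O(\sqrt{t_n\ln t_n^{-1}})$, an order of magnitude larger than the target $o(\sqrt{t_n\ln\ln t_n^{-1}})$; and this bound is dominated by the sizes of $|X(u_{nk},s)-u_{nk}|$ themselves, so it is insensitive to the grid spacing and cannot be improved by refining to $t_n^{1/2}\eta(t_n)$. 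The tension you flag as ``delicate bookkeeping'' is not a bookkeeping problem; with this decomposition the $X$-increment term is irreducibly too large.

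The paper sidesteps both issues by using monotonicity of $X$ differently. From $X(u_{nk},s)\le X(u,s)\le X(u_{n,k+1},s)$ one gets
\[
Z(u_{nk},s)+Y(u_{nk},s)-Y(u,s)\ \le\ Z(u,s)\ \le\ Z(u_{n,k+1},s)+Y(u_{n,k+1},s)-Y(u,s),
\]
so only $Y$-increments and $Z$ at grid points appear — $X$-increments never do. This removes any pressure toward a fine grid, and the paper takes it very coarse: $\lfloor\ln n\rfloor$ points at spacing $1/\ln n$. Then (i) the union-bound multiplier is $\ln n$ rather than $q^{-n/2}$, so Borel--Cantelli works with the threshold $\sqrt{3\varepsilon\,t_n\ln\ln t_n^{-1}}$ for a \emph{fixed} $\varepsilon>0$ (stopping at $\tau=\inf\{t:V(t)>\varepsilon t\}$), which is sent to $0$ only at the end, yielding the $\ln\ln$ rate with no rate assumption on how fast $V(t)/t\to 0$; and (ii) the $Y$-increment error is bounded by $\alpha(1/\ln n)\,\|Y(\cdot,t_n)\|_\alpha$, where $\alpha$ is a modulus with $\|Y(\cdot,1)\|_\alpha<\infty$ a.s.\ (here continuity of $Y$ enters), and Gaussian concentration (Theorem \ref{thm:Concentration}) gives $\|Y(\cdot,t_n)\|_\alpha=O(\sqrt{t_n\ln\ln t_n^{-1}})$, so this term is $o(\sqrt{t_n\ln\ln t_n^{-1}})$ since $\alpha(1/\ln n)\to 0$.
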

\begin{proof}
Take a function $\alpha:\left[0,1\right]\to\mathbb{R}_{+}$ that is
monotone, continuous, satisfying $\alpha\left(0\right)=0$ and such
that\begin{equation}
\left\Vert Y\left(\cdot,1\right)\right\Vert _{\alpha}:=\sup_{0\le u<v\le1}\frac{\left|Y\left(u,1\right)-Y\left(v,1\right)\right|}{\alpha\left(\left|u-v\right|\right)}<+\infty\text{ a.s.}\label{eq:AlphaSeminorm}\end{equation}
Its existence may be easily deduced from the fact that the distribution
of $Y\left(\cdot,1\right)$ is supported on a $\sigma$-compact subspace
of $C\left[0,1\right]$. Let $t_{n}$ be $q^{n}$ for some $0<q<1$,
and let's consider $\left\lfloor \ln n\right\rfloor $ points $u_{nk}:=k/\ln n$.
For $Y$ to have a continuous modification, $\varphi$ must be continuous
at zero. Therefore, $X\left(u,\cdot\right)-Y\left(u,\cdot\right)$
are martingales whose quadratic variation is $o\left(t\right)$ uniformly
in $u$:\begin{equation}
V\left(t\right):=\sup_{u\in\left[0,1\right]}\left|\left\langle X\left(u,t\right)-Y\left(u,t\right)\right\rangle \right|=2\sup_{u\in\left[0,1\right]}\left|\intop_{0}^{t}\left(1-\varphi\left(X\left(u,s\right)-u\right)\right)ds\right|=o\left(t\right).\label{eq:VBound}\end{equation}
This implies that $\left|X\left(u,t\right)-Y\left(u,t\right)\right|$
must be $o\left(\sqrt{t\ln\ln t^{-1}}\right)$ for each $u$, and
moreover, uniformly in $u=u_{nk}$, since there are {}``not too many''
of them. More precisely, let $\tau$ be $\inf\left\{ t\,\middle|\, V\left(t\right)>\varepsilon t\right\} $.
One-dimensional continuous martingales are time-changed Brownian motions
\cite{Kal}, hence \begin{multline*}
\sum_{n}\P\left\{ \sup_{0\le k<1/\ln n}\sup_{s\le t_{n}}\left|X\left(u_{nk},s\wedge\tau\right)-Y\left(u_{nk},s\wedge\tau\right)\right|\ge\sqrt{3\varepsilon t_{n}\ln\ln t_{n}^{-1}}\right\} \le\\
\le\mathrm{const}\cdot\sum_{n}\ln n\cdot\exp\left[-\frac{1}{2}\cdot3\ln\ln t_{n}^{-1}\right]\le\mathrm{const}\cdot\sum_{n}\ln n\cdot\left(\ln t_{n}^{-1}\right)^{-3/2}=\\
=\mathrm{const}\cdot\sum_{n}\frac{\ln n}{n^{3/2}}<+\infty.\end{multline*}
By letting $\varepsilon$ be small enough we obtain\[
\sup_{k}\sup_{s\le t_{n}}\left|X\left(u_{nk},s\wedge\tau\right)-Y\left(u_{nk},s\wedge\tau\right)\right|=o\left(\sqrt{t_{n}\ln\ln t_{n}^{-1}}\right).\]
Since $\tau$ is a.s. positive, we may use $X\left(u_{nk},s\right)-Y\left(u_{nk},s\right)$
instead of $X\left(u_{nk},s\wedge\tau\right)-Y\left(u_{nk},s\wedge\tau\right)$.

Points $u\in\left[0,1\right]$ other than $u_{nk}$ may be treated
as follows. Let $k$ be such that $u_{nk}\le u\le u_{n,k+1}$. Then\begin{multline}
\left|X\left(u,s\right)-Y\left(u,s\right)\right|\le\\
\le2\left|Y\left(u_{nk},s\right)-X\left(u_{nk},s\right)\right|+\left|Y\left(u_{n,k+1},s\right)-X\left(u_{n,k+1},s\right)\right|+\\
+\left|Y\left(u_{n,k+1},s\right)-Y\left(u_{nk},s\right)\right|+\left|Y\left(u,s\right)-Y\left(u_{nk},s\right)\right|,s\le t_{n}.\label{eq:GeneralU}\end{multline}
The first two terms in \eqref{eq:GeneralU} are already shown to be
uniformly $o\left(\sqrt{t_{n}\ln\ln t_{n}^{-1}}\right)$. The last
two terms are actually $O\left(\alpha\left(u_{n,k+1}-u_{nk}\right)\sqrt{t_{n}\ln\ln t_{n}^{-1}}\right)$
uniformly in $k$ and $s\le t_{n}$. This follows from the concentration
principle for the $\alpha$-seminorm in \eqref{eq:AlphaSeminorm},
which is in fact valid for any Lipshitz function of a Gaussian random
vector (see Theorem \ref{thm:Concentration} in Appendix). More precisely,
the following inequality holds:\[
\P\left\{ \left\Vert Y\left(\cdot,t\right)\right\Vert _{\alpha}\ge\E\left\Vert Y\left(\cdot,t\right)\right\Vert _{\alpha}+C\right\} \le e^{-C^{2}/2\sigma^{2}t}.\]
for some $\sigma$ and any positive $C$. Together with the fact that
$\E\left\Vert Y\left(\cdot,t\right)\right\Vert _{\alpha}$ is finite
and evidently $O\left(t\right)$, this yields\[
\left\Vert Y\left(\cdot,t_{n}\right)\right\Vert _{\alpha}=O\left(\sqrt{t_{n}\ln\ln t_{n}^{-1}}\right).\]
Therefore,\begin{multline*}
\sup_{s\le t_{n}}\left|X\left(u,s\right)-Y\left(u,s\right)\right|\le o\left(\sqrt{t_{n}\ln\ln t_{n}^{-1}}\right)+\alpha\left(1/\ln t_{n}\right)\left\Vert Y\left(\cdot,t_{n}\right)\right\Vert _{\alpha}=\\
=o\left(\sqrt{t_{n}\ln\ln t_{n}^{-1}}\right).\end{multline*}
$t\neq t_{n}$ are handled in a usual way by letting $q$ close to
$1$.
\end{proof}
Though there are cases when the {}``tangent process'' is discontinuous
and nevertheless the difference $X-Y$ is small enough%
\footnote{We mean not the supremum over $u\in\left[0,1\right]$, which is of
course infinite, but rather the supremum over an increasing number
of points, as considered in Section \ref{sec:MainResult}.%
}, it seems that this is not the case in general. That's why in the
sequel we do not estimate the difference but rather compare the tail
probabilities of $X$ with those of $Y$. In this way we estimate
$\sup_{u\in\left[0,1\right]}\left|X\left(u,t\right)-u\right|$ up
to an $O\left(\sqrt{t\ln\ln t^{-1}}\right)$ term, which is slightly
weaker than the $o\left(\sqrt{t\ln\ln t^{-1}}\right)$ in Theorem
\ref{thm:ContinuousCase}.

\section{Tail comparison \label{sec:MainResult}}

In this section we consider short-time asymptotical behaviour of the
flow with no regularity assumptions on the {}``tangent process''
except local monotonicity of the covariation function. Basically,
we use the same approach as in Theorems \ref{thm:UpperBound} and
\ref{thm:ContinuousCase}. Namely, we start by estimating the deviation
of an increasing number of points $u_{nk}$, and then use the monotonicity
property of the flow to handle the points other than $u_{nk}$. It
turns out that $t_{n}^{-1/2}$ points $u_{nk}=kt_{n}^{1/2}$ give
the right asymptotics up to an $O\left(\sqrt{t_{n}\ln\ln t_{n}^{-1}}\right)$
term.

As it was mentioned earlier, we compare the asymptotical behavior
of the flow to that of a Gaussian process. So first of all, let's
see what happens in the Gaussian case. It is known that the probability
distribution of the supremum of a Gaussian process is concentrated
around its mean at least as strongly as a single Gaussian r.v. is
(see Theorem \ref{thm:Concentration} in Appendix). That is, if $M$
is a centered Gaussian vector in $\mathbb{R}^{d}$, then\begin{equation}
\P\left\{ \left|\sup_{i}M^{i}-\E\sup_{i}M^{i}\right|\ge x\right\} \le Ce^{-x^{2}/2\sigma^{2}}.\label{eq:Concentration}\end{equation}
for some absolute constant $C$ and any $x\ge0$, $\sigma^{2}$ being
$\sup_{i}\E\left(M^{i}\right)^{2}$. From this concentration inequality
it is easy to deduce a law of iterated logarithm of the following
kind:\[
\limsup_{n\to+\infty}\frac{\left|\sup_{k}\left|Y\left(u_{nk},t_{n}\right)-u_{nk}\right|-\E\sup_{k}\left|Y\left(u_{nk},t_{n}\right)-u_{nk}\right|\right|}{\sqrt{2t_{n}\ln\ln t_{n}^{-1}}}\le1.\]
If $Y$ is continuous, then $\E\sup_{k}\left|Y\left(u_{nk},t_{n}\right)-u_{nk}\right|\sim\mathrm{const}\cdot t_{n}^{1/2}$.
In our case, though, the process may be discontinuous, and $\E\sup_{k}\left|Y\left(u_{nk},t_{n}\right)-u_{nk}\right|$
may be asymptotically greater than $\sqrt{t_{n}\ln\ln t_{n}^{-1}}$.
Actually, for the Arratia flow $Y$ consists of independent Brownian
motions%
\footnote{We do not care about separability since in this section we use the
distribution of $Y$ of finite or countable dimension only.%
}, and in this case \[
\sup_{k}\left|Y\left(u_{nk},t_{n}\right)-u_{nk}\right|\sim\E\sup_{k}\left|Y\left(u_{nk},t_{n}\right)-u_{nk}\right|\sim\sqrt{t_{n}\ln t_{n}^{-1}}.\]

We do not know whether a concentration inequality similar to \eqref{eq:Concentration}
holds for $\sup_{u}\left|X\left(u,t\right)-u\right|$. Nevertheless,
we show that $\sup_{u}\left|X\left(u,t\right)-u\right|$ is deterministic
up to $O\left(\sqrt{t\ln\ln t^{-1}}\right)$.
\begin{thm}
Assume that $\varphi$ is monotone on $\left[0,\delta\right]$ for
some $\delta>0$. Then\begin{equation}
\sup_{u\in\left[0,1\right]}\left|X\left(u,t\right)-u\right|=E\left(t\right)+O\left(\sqrt{t\ln\ln t^{-1}}\right),t\to0\text{ a.s.},\label{eq:MainResult}\end{equation}
$E\left(t\right)$ being defined by\[
E\left(t\right)=\E\sup_{0\le k<t^{-1/2}}\left|Y\left(kt^{1/2},t\right)-kt^{1/2}\right|.\]
\end{thm}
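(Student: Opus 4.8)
The plan is to follow the same three-layer strategy used in Theorems \ref{thm:UpperBound} and \ref{thm:ContinuousCase}: (i) reduce to the lattice points $u_{nk}=kt_n^{1/2}$ along the geometric time sequence $t_n=q^n$; (ii) compare $\sup_k|X(u_{nk},t_n)-u_{nk}|$ with the Gaussian quantity $\sup_k|Y(u_{nk},t_n)-u_{nk}|$ in distribution, using the martingale Slepian comparison (Theorem \ref{thm:Comparison}) to get two-sided control of the tails; (iii) pass from the lattice to all $u\in[0,1]$ and from $t_n$ to all small $t$ by the monotonicity-plus-geometric-interpolation argument already rehearsed twice above.

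For step (ii) the key point is that, since $\varphi$ is monotone near $0$, the covariation structure of $X$ is comparable to that of $Y$ in the sense required by Slepian: for lattice points, $|X(u_{nk},t)-u_{nk}|$ and $|X(u_{nj},t)-u_{nj}|$ have joint covariation governed by $\varphi$ evaluated at $X(u_{nk},t)-X(u_{nj},t)$, which, because the flow is monotone and the $u_{nk}$ are $t_n^{1/2}$-separated, can be sandwiched between $\varphi$ evaluated at nearby deterministic arguments. This lets me dominate the tail of $\sup_k|X(u_{nk},t_n)-u_{nk}|$ above and below by the tails of Gaussian suprema whose means are both $E(t_n)+O(t_n^{1/2})$ (using the definition of $E(t)$ together with the observation that $\lceil t_n^{-1/2}\rceil$ versus $t_n^{-1/2}$, and the $q$-dependent time scaling, only perturb the mean by $O(t_n^{1/2})$, which is absorbed). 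Then the Gaussian concentration inequality \eqref{eq:Concentration}, applied at level $x=\sqrt{(2+\varepsilon)t_n\ln\ln t_n^{-1}}$ with $\sigma^2=t_n$, makes the relevant probabilities summable in $n$, and Borel--Cantelli gives
\[
\Bigl|\sup_{0\le k<t_n^{-1/2}}\bigl|X(u_{nk},t_n)-u_{nk}\bigr|-E(t_n)\Bigr|=O\!\left(\sqrt{t_n\ln\ln t_n^{-1}}\right)\text{ a.s.}
\]

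For step (iii), given $u\in[0,1]$ pick $k$ with $u_{nk}\le u\le u_{n,k+1}$; monotonicity of the flow gives $|X(u,t)-u|\le |X(u_{nk},t)-u_{nk}|\vee|X(u_{n,k+1},t)-u_{n,k+1}|+t_n^{1/2}$, exactly as in the proof of Theorem \ref{thm:UpperBound}, and the lower bound $\sup_u|X(u,t)-u|\ge\sup_k|X(u_{nk},t)-u_{nk}|$ is trivial; the extra $t_n^{1/2}$ is swallowed by the $O(\sqrt{t_n\ln\ln t_n^{-1}})$ term. Finally, for general $t\in[q^{n+1},q^n]$, monotonicity in $t$ of $X(u,\cdot)-u$ is not available, but the quadratic-variation bound $\langle X(u,\cdot)-u\rangle_t=t$ lets me use a maximal inequality over $s\le t_n$ (as in Theorem \ref{thm:ContinuousCase}) to control $\sup_{s\le t_n}\sup_u|X(u,s)-u|$, while continuity of $E(\cdot)$ up to an $O(\sqrt{t\ln\ln t^{-1}})$ error — which follows from Gaussian concentration applied to the $q$-perturbed process — handles the drift of the centering; letting $q\uparrow 1$ closes the gap between the $\limsup$ and $\liminf$ versions.

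The main obstacle is step (ii): verifying that the martingale Slepian comparison genuinely applies. The subtlety is that Theorem \ref{thm:Comparison} compares $\sup$ of martingale families whose pairwise covariation derivatives are pointwise ordered, whereas here the covariation derivative of $X(u_{nk},\cdot)$ with $X(u_{nj},\cdot)$ is $\varphi(X(u_{nk},t)-X(u_{nj},t))$, a \emph{random} argument. I expect to handle this by freezing the comparison: on the event that all coalescences among the lattice points up to time $t_n$ respect the ordering (which has probability $1$ by monotonicity) the random argument stays within $[u_{nj}-u_{nk}-o(t_n^{1/2}),\,u_{nj}-u_{nk}+o(t_n^{1/2})]$ up to the first coalescence, and after coalescence the two coordinates are identical so $\varphi=1$ there too — so monotonicity of $\varphi$ near $0$ forces the random covariation to be squeezed between the covariations of two genuine Gaussian families (one with slightly contracted, one with slightly dilated spacings). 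Making this squeezing uniform in $k,j$ and over $s\le t_n$, and checking that the resulting Gaussian means differ from $E(t_n)$ by only $O(t_n^{1/2})$, is the technical heart of the argument; everything else is a repetition of the Borel--Cantelli/interpolation scheme already used in the previous two theorems.
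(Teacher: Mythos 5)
Your overall three-layer strategy (lattice points on a geometric time scale, martingale Slepian comparison, monotonicity interpolation) is the same one the paper uses, so the outline is right. However, the quantitative heart of step (ii) is wrong in a way that changes the entire structure of the comparison.

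You assert that before the first coalescence the random argument $X(u_{nj},t)-X(u_{nk},t)$ stays within $o(t_n^{1/2})$ of $u_{nj}-u_{nk}$, so that only ``slightly contracted/dilated'' Gaussian spacings are needed and their means differ from $E(t_n)$ by merely $O(t_n^{1/2})$. This is false. By Theorem~\ref{thm:UpperBound}, the a.s.\ uniform deviation $\sup_u|X(u,s)-u|$ over $s\le t_n$ is of order $\sqrt{t_n\ln t_n^{-1}}$, which is \emph{larger} than the lattice spacing $t_n^{1/2}$ by a factor $\sqrt{\ln t_n^{-1}}\to\infty$. Consequently the argument of $\varphi$ is perturbed by an amount that dwarfs the spacing, and the comparison Gaussian family cannot be a slight perturbation of $(Y(u_{nk},\cdot))$: one has to dilate (for the upper bound) or spread out the $X$-points (for the lower bound) by a factor $N\asymp\sqrt{\ln t_n^{-1}}$. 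The paper does exactly this via the stopping time $\tau_n=\inf\{t:\sup_u|X(u,t)-u|\ge 2\sqrt{t_n\ln t_n^{-1}}\}$ and the dilated lattice $\tilde u_{nk}=2\lceil\sqrt{\ln t_n^{-1}}\rceil\,u_{nk}$, and only then does the pointwise inequality on $\varphi$ (needed for Theorem~\ref{thm:Comparison}) actually hold a.s.\ up to time $\tau_n$.

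This sizing error also hides the genuinely non-trivial remaining step, which your proposal skips: after the $N$-fold dilation one must show that the Gaussian supremum over $N\,t_n^{-1/2}$ points still has mean $E(t_n)+O(\sqrt{t_n\ln\ln t_n^{-1}})$. This does \emph{not} follow from scaling or from concentration alone; the paper partitions the dilated lattice into $N$ translates of the original, applies Gaussian concentration to deduce sub-Gaussianity of each $S_m$, and then uses the classical bound $\E\sup_m S_m\le \E S_0+\sqrt{2t_n\ln N}$. Since $N\asymp\sqrt{\ln t_n^{-1}}$, the extra term is $\sqrt{2t_n\ln N}\asymp\sqrt{t_n\ln\ln t_n^{-1}}$, which is precisely where the $O(\sqrt{t\ln\ln t^{-1}})$ in the theorem comes from. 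Your $O(t_n^{1/2})$ claim would incorrectly promise a sharper result. Finally, your remark that ``after coalescence $\varphi=1$ there too'' is fine for the upper bound direction (it only strengthens the ordering $K_M^{ij}\ge K_N^{ij}$) but is an obstacle for the lower bound, where the ordering must go the other way; this is why the paper ``exchanges $u_{nk}$ and $\tilde u_{nk}$,'' spreading the $X$-points by $N$ so that, before $\tau_n$, they cannot coalesce, making the reverse covariation inequality deterministic. You need to spell this asymmetry out rather than invoke symmetry.
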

\begin{proof}
In the proof we assume that $\varphi$ is monotone on $\left(0,+\infty\right)$.
If $\varphi$ is only locally monotone, the result is obtained for
sufficiently small intervals instead of $\left[0,1\right]$.

First let's prove the upper bound. As usual, take $t_{n}=q^{n}$ and
$u_{nk}=kt_{n}^{1/2}$. For the comparison inequality (Theorem \ref{thm:Comparison})
to be applicable we need a deterministic bound from below on the infinitesimal
covariation of the martingale $\left(X\left(u_{nk},t\right)-u_{nk}\right)$.
If $\varphi$ is monotone on $\left[0,+\infty\right)$, it is sufficient
to obtain a deterministic upper bound on $\sup_{t\le t_{n}}\sup_{k}\left|X\left(u_{nk},t\right)-u_{nk}\right|$.
So we stop the martingale once the deviation gets too large. To be
precise, let's consider the following optional times:\[
\tau_{n}:=\inf\left\{ t\,\middle|\,\sup_{u\in\left[0,1\right]}\left|X\left(u,t\right)-u\right|\ge2\sqrt{t_{n}\ln t_{n}^{-1}}\right\} .\]
Theorem \ref{thm:UpperBound} implies that a.s. $\tau_{n}\ge t_{n}$
for sufficiently large $n$. Take $\tilde{u}_{nk}:=2\left\lceil \sqrt{\ln t_{n}^{-1}}\right\rceil u_{nk}$.
If $\varphi$ is monotone on $\left[0,+\infty\right)$, then the $2\left\lfloor t_{n}^{-1/2}\right\rfloor $-dimensional
martingales $\pm\left(X\left(u_{nk},t\wedge\tau_{n}\right)-u_{nk}\right)$
and $\pm\left(Y\left(\tilde{u}_{nk},t\right)-\tilde{u}_{nk}\right)$
satisfy the conditions of Theorem \ref{thm:Comparison}. Thus\[
\E\exp\lambda\sup_{k}\left|X\left(u_{nk},t_{n}\wedge\tau_{n}\right)-u_{nk}\right|\le\E\exp\lambda\sup_{k}\left|Y\left(\tilde{u}_{nk},t_{n}\right)-\tilde{u}_{nk}\right|\]
for any $\lambda\ge0$ (see also Remark \ref{rem:Submodularity} in
Appendix). Since $\sup_{k}\left|X\left(u_{nk},t\wedge\tau_{n}\right)-u_{nk}\right|$
is a submartingale, the well-known (sub)martingale inequalities \cite{Kal}
imply\begin{equation}
\E\exp\lambda\sup_{t\le t_{n}}\sup_{k}\left|X\left(u_{nk},t\wedge\tau_{n}\right)-u_{nk}\right|\le\mathrm{const}\cdot\E\exp\lambda\sup_{k}\left|Y\left(\tilde{u}_{nk},t_{n}\right)-\tilde{u}_{nk}\right|.\label{eq:ExpX}\end{equation}
The right-hand term may be estimated by means of the concentration
inequality (Theorem \ref{thm:Concentration}):\begin{equation}
\E\exp\lambda\sup_{k}\left|Y\left(\tilde{u}_{nk},t_{n}\right)-\tilde{u}_{nk}\right|\le\exp\left[\lambda\E\sup_{k}\left|Y\left(\tilde{u}_{nk},t_{n}\right)-\tilde{u}_{nk}\right|+t_{n}\lambda^{2}/2\right].\label{eq:ExpYTilde}\end{equation}
What remains is to show that \[
\E\sup_{k}\left|Y\left(\tilde{u}_{nk},t_{n}\right)-\tilde{u}_{nk}\right|=E\left(t_{n}\right)+O\left(\sqrt{t_{n}\ln\ln t_{n}^{-1}}\right),\]
that is, to compare $\E\sup_{k}\left|Y\left(Nu_{nk},t_{n}\right)-Nu_{nk}\right|$
and $\E\sup_{k}\left|Y\left(u_{nk},t_{n}\right)-u_{nk}\right|$, $N$
being equal to $2\left\lceil \sqrt{\ln t_{n}^{-1}}\right\rceil $.
The following inequality is trivial:\[
\sup_{0\le k<t_{n}^{-1/2}}\left|Y\left(Nu_{nk},t_{n}\right)-Nu_{nk}\right|\le\sup_{0\le m<C}S_{m},\]
where\[
S_{m}:=\sup_{0\le k<t_{n}^{-1/2}}\left|Y\left(u_{nk}+mt_{n}^{1/2},t_{n}\right)-u_{nk}-mt_{n}^{1/2}\right|.\]
Note that $S_{m}$ are identically distributed, and also sub-Gaussian
due to the concentration inequality. That is,\[
\E\exp\lambda S_{m}\le\exp\left(\lambda\E S_{m}+t_{n}\lambda^{2}/2\right).\]
What follows is a classical argument that gives an upper bound for
the expectation of supremum of independent sub-Gaussian variables
\cite{Lif}.\begin{multline*}
\E\sup_{m}S_{m}\le\inf_{\lambda}\frac{1}{\lambda}\ln\E\exp\lambda\sup_{m}S_{m}\le\inf_{\lambda}\frac{1}{\lambda}\ln\sum_{m}\E\exp\lambda S_{m}\le\\
\le\inf_{\lambda}\frac{1}{\lambda}\ln\left(N\exp\left(\lambda\E S_{m}+t_{n}\lambda^{2}/2\right)\right)=\inf_{\lambda}\left(\E S_{m}+t_{n}\lambda/2+\frac{\ln N}{\lambda}\right)=\\
=\E S_{m}+\sqrt{2t_{n}\ln N}.\end{multline*}
Since $N\asymp\sqrt{\ln t_{n}^{-1/2}}$, we obtain\begin{equation}
\E\sup_{k}\left|Y\left(\tilde{u}_{nk},t_{n}\right)-\tilde{u}_{nk}\right|\le E\left(t_{n}\right)+O\left(\sqrt{t_{n}\ln\ln t_{n}^{-1}}\right).\label{eq:ExpY}\end{equation}
By combining \eqref{eq:ExpX}, \eqref{eq:ExpYTilde} and \eqref{eq:ExpY},
we obtain\begin{multline*}
\E\exp\lambda\sup_{t\le t_{n}}\sup_{k}\left|X\left(u_{nk},t\wedge\tau_{n}\right)-u_{nk}\right|\le\\
\le\mathrm{const}\cdot\exp\left[\lambda\left(E\left(t_{n}\right)+\mathrm{const}\cdot\sqrt{t_{n}\ln\ln t_{n}^{-1}}\right)+t_{n}\lambda^{2}/2\right].\end{multline*}
Now to estimate the tail probability we may use the Chernoff bound
\cite{Lug}:\begin{multline*}
\P\left\{ \sup_{t\le t_{n}}\sup_{k}\left|X\left(u_{nk},t\wedge\tau_{n}\right)-u_{nk}\right|\ge C+E\left(t_{n}\right)+\mathrm{const}\cdot\sqrt{t_{n}\ln\ln t_{n}^{-1}}\right\} \le\\
\le\mathrm{const}\cdot\inf_{\lambda}e^{-\lambda C+t_{n}\lambda^{2}/2}=\mathrm{const}\cdot e^{-C^{2}/2t_{n}}.\end{multline*}
This implies the upper bound in the law of iterated logarithm for
\[
\sup_{t\le t_{n}}\sup_{k}\left|X\left(u_{nk},t\wedge\tau_{n}\right)-u_{nk}\right|-E\left(t_{n}\right),\]
and since $\tau_{n}\ge t_{n}$ for $n$ sufficiently large, the same
for \[
\sup_{t\le t_{n}}\sup_{k}\left|X\left(u_{nk},t\right)-u_{nk}\right|-E\left(t_{n}\right).\]
The remaining steps are routine.

The lower bound in \eqref{eq:MainResult} is obtained along the same
way. The difference is that now we exchange $u_{nk}$ and $\tilde{u}_{nk}$
to get a bound on the infinitesimal covariation from below.
\end{proof}

\section{Appendix: Comparison and Concentration \label{sec:Appendix}}

The classical comparison inequality due to Slepian says that if $\left(M^{i}\right)$
and $\left(N^{i}\right)$ are centered Gaussian random vectors in
$\mathbb{R}^{d}$ with $\E\left(M^{i}\right)^{2}=\E\left(N^{i}\right)^{2}$
and $\E M^{i}M^{j}\ge\E N^{i}N^{j}$, then $\max_{i}N^{i}$ stochastically
dominates $\max_{i}M^{i}$ \cite{Lif}. For our purpose we need a
generalization involving martingales%
\footnote{Indeed a martingale and a Gaussian martingale.%
} compared by quadratic covariation instead of Gaussian vectors compared
by covariance.

We start with a martingale version of the lemma that is used to derive
comparison inequalities for Gaussian vectors \cite{Lif}.
\begin{lem}
Let $\left(M\left(t\right)\right)_{t\in\left[0,1\right]}$ be a continuous
$\mathbb{R}^{d}$-valued martingale and $\left(N\left(t\right)\right)_{t\in\left[0,1\right]}$
be a continuous $\mathbb{R}^{d}$-valued Gaussian martingale, both
with absolutely continuous quadratic variation and satisfying $M\left(0\right)=N\left(0\right)=0$.
Assume that $N$ is independent of $M$. Then for any $C^{2}$-smooth
function $f:\mathbb{R}^{d}\to\mathbb{R}$ with second derivatives
of at most exponential growth%
\footnote{That is, $\partial_{ij}f\left(x\right)=O\left(\exp\lambda\left\Vert x\right\Vert \right)$
for some $\lambda$. Of course, there must be more natural growth
conditions.%
} the following equality holds:\begin{multline}
\E f\left(M\left(1\right)\right)-\E f\left(N\left(1\right)\right)=\\
=\frac{1}{2}\intop_{0}^{1}\sum_{i,j}\E\partial_{ij}f\left(M\left(t\right)+N\left(1\right)-N\left(t\right)\right)\left(K_{M}^{ij}\left(t\right)-K_{N}^{ij}\left(t\right)\right)dt,\label{eq:Interpolation}\end{multline}
where \[
K_{M}^{ij}\left(t\right)=\frac{d}{dt}\left\langle M^{i}\left(t\right),M^{j}\left(t\right)\right\rangle ,\]
\[
K_{N}^{ij}\left(t\right)=\frac{d}{dt}\left\langle N^{i}\left(t\right),N^{j}\left(t\right)\right\rangle .\]
\label{lem:Interpolation}\end{lem}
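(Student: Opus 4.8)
The plan is to interpolate between $N\left(1\right)$ and $M\left(1\right)$ by \emph{swapping the future} of the two processes. Set
\[
Z_{t} := M\left(t\right) + N\left(1\right) - N\left(t\right),
\]
so that $Z_{0} = N\left(1\right)$ and $Z_{1} = M\left(1\right)$, and write $\E f\left(M\left(1\right)\right) - \E f\left(N\left(1\right)\right) = \E f\left(Z_{1}\right) - \E f\left(Z_{0}\right)$ as a telescoping sum along a partition $0 = t_{0} < t_{1} < \dots < t_{n} = 1$:
\[
\E f\left(Z_{1}\right) - \E f\left(Z_{0}\right) = \sum_{k=0}^{n-1}\E\left[f\left(Z_{t_{k+1}}\right) - f\left(Z_{t_{k}}\right)\right].
\]
The reason for this particular interpolation is that, writing $W_{k} := N\left(1\right) - N\left(t_{k+1}\right)$, $\delta_{k}M := M\left(t_{k+1}\right) - M\left(t_{k}\right)$, $\delta_{k}N := N\left(t_{k+1}\right) - N\left(t_{k}\right)$ and $x_{k} := M\left(t_{k}\right) + W_{k}$, one checks immediately that $Z_{t_{k+1}} = x_{k} + \delta_{k}M$ and $Z_{t_{k}} = x_{k} + \delta_{k}N$: the two arguments differ only by which of the two increments — $\delta_{k}M$ (martingale) or $\delta_{k}N$ (Gaussian) — is added to the common base point $x_{k}$. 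This is exactly the device that makes the quadratic variations of $M$ and of $N$ appear with opposite signs.

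First I would condition the $k$-th summand on $\mathcal{G}_{k} := \sigma\left(M\left(s\right), N\left(s\right) : s \le t_{k}\right) \vee \sigma\left(W_{k}\right)$. Since $N$ is Gaussian, hence has independent increments, and is independent of $M$, the increment $\delta_{k}N$ is independent of $\mathcal{G}_{k}$; in particular $\E\left[\delta_{k}N \mid \mathcal{G}_{k}\right] = 0$ and $\E\left[\delta_{k}N^{i}\,\delta_{k}N^{j} \mid \mathcal{G}_{k}\right] = \left\langle N^{i}, N^{j}\right\rangle_{t_{k+1}} - \left\langle N^{i}, N^{j}\right\rangle_{t_{k}}$ (a deterministic quantity). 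Since $M$ is a martingale and $\sigma\left(N\right)$ is independent of $M$, one likewise gets $\E\left[\delta_{k}M \mid \mathcal{G}_{k}\right] = 0$ and $\E\left[\delta_{k}M^{i}\,\delta_{k}M^{j} \mid \mathcal{G}_{k}\right] = \E\left[\left\langle M^{i}, M^{j}\right\rangle_{t_{k+1}} - \left\langle M^{i}, M^{j}\right\rangle_{t_{k}} \mid \mathcal{G}_{k}\right]$, while $x_{k}$ is $\mathcal{G}_{k}$-measurable. Taylor-expanding $f\left(x_{k}+\delta_{k}M\right)$ and $f\left(x_{k}+\delta_{k}N\right)$ to second order about $x_{k}$ and taking conditional expectations, the first-order contributions drop out and
\[
\E\left[f\left(Z_{t_{k+1}}\right) - f\left(Z_{t_{k}}\right) \mid \mathcal{G}_{k}\right] = \frac{1}{2}\sum_{i,j}\partial_{ij}f\left(x_{k}\right)\E\left[\delta_{k}M^{i}\,\delta_{k}M^{j} - \delta_{k}N^{i}\,\delta_{k}N^{j} \mid \mathcal{G}_{k}\right] + R_{k}.
\]
Taking expectations (the $\mathcal{G}_{k}$-measurability of $\partial_{ij}f\left(x_{k}\right)$ lets one drop the inner conditional expectation), summing over $k$, using absolute continuity of $\left\langle M\right\rangle$ and $\left\langle N\right\rangle$ to write the increments as $\int_{t_{k}}^{t_{k+1}}K_{M}^{ij}$ and $\int_{t_{k}}^{t_{k+1}}K_{N}^{ij}$, and noting that $x_{k} = M\left(t_{k}\right) + N\left(1\right) - N\left(t_{k+1}\right) \to M\left(t\right) + N\left(1\right) - N\left(t\right)$ by continuity, the principal term converges to the right-hand side of \eqref{eq:Interpolation} as the mesh tends to $0$.

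The main obstacle is to make this passage to the limit rigorous, i.e. to show $\sum_{k}\E R_{k} \to 0$ and that the principal Riemann-type sums really converge to the stated integral. Both reduce to (i) controlling the Taylor remainder $R_{k}$, whose integrand is $\partial_{ij}f$ evaluated at intermediate points between $x_{k}$ and $x_{k}+\delta_{k}M$ (resp. $x_{k}+\delta_{k}N$) rather than at $x_{k}$, and (ii) a dominated-convergence argument permitting the interchange of limit, sum and expectation (and a final Fubini step). These in turn rest on moment bounds for the increments — Burkholder--Davis--Gundy for $M$, Gaussian tail estimates for $N$ — combined with the exponential-growth hypothesis on the second derivatives, which effectively forces one to assume (or first establish) finiteness of $\E\exp\left(\lambda\sup_{t\le1}\left\Vert M\left(t\right)\right\Vert\right)$ and $\E\exp\left(\lambda\sup_{t\le1}\left\Vert N\left(t\right)\right\Vert\right)$; for the Gaussian martingale $N$ this is automatic once $\left\langle N\right\rangle_{1}$ is under control, and for $M$ it is harmless in the intended application, where $M$ is a stopped, hence bounded, martingale. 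A clean way to organize the proof is to establish \eqref{eq:Interpolation} first for $f$ with bounded first and second derivatives — where every interchange follows from plain dominated convergence and no growth condition is needed — and then remove the boundedness by truncation, invoking the exponential moments above.
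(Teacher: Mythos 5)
Your proof is correct in outline and uses the same interpolating path $Z_t = M(t) + N(1) - N(t)$ as the paper, but the mechanism for extracting the identity is genuinely different. The paper introduces the two-parameter process $F(t,s) := f\bigl(M(t) + \tilde N(s)\bigr)$ with $\tilde N(s) := N(1) - N(1-s)$, applies Itô's formula in $t$ and in $s$ separately (using the independence of the two filtrations), takes expectations so that the $dM$ and $d\tilde N$ terms vanish, and then differentiates $\E F(t,1-t)$ along the anti-diagonal to obtain \eqref{eq:Interpolation} as a one-line integration. You instead proceed by the Lindeberg exchange method: telescope $\E f(Z_1) - \E f(Z_0)$ over a partition, observe that $Z_{t_{k+1}}$ and $Z_{t_k}$ share the common base point $x_k = M(t_k) + N(1) - N(t_{k+1})$ and differ only in which increment, $\delta_k M$ or $\delta_k N$, is added, Taylor-expand to second order, condition on $\mathcal{G}_k$ to kill the first-order terms and replace $\delta_k M^i \delta_k M^j$ by the bracket increment, and pass to the mesh limit. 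The paper's route is shorter and produces the integral formula in closed form, at the cost of appealing to a slightly nonstandard two-parameter Itô calculus (justified by conditioning on one filtration at a time); your route is more elementary and makes the ``swap one increment at a time'' structure explicit, at the cost of having to control Taylor remainders and justify the Riemann-sum convergence. Your remark that the exponential-growth hypothesis on $\partial_{ij}f$ presupposes exponential integrability of $\sup_t \|M(t)\|$ is accurate and in fact identifies a small gap in the lemma's hypotheses that the paper's proof also glosses over; as you say, it is harmless in the paper's application because $M$ is stopped so as to be bounded, and your suggestion to first prove the identity for bounded $\partial f$, $\partial^2 f$ and then truncate is a clean way to organize the argument. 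One small point to tighten: for the Riemann sums to converge you use that $\partial_{ij}f\bigl(M(t_k) + N(1) - N(t_{k+1})\bigr) \to \partial_{ij}f\bigl(M(t) + N(1) - N(t)\bigr)$ a.s., which needs path continuity of both $M$ and $N$; this is assumed in the lemma, so no difficulty, but it should be invoked explicitly alongside the dominated-convergence step.
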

\begin{proof}
Let's denote $N\left(1\right)-N\left(1-t\right)$ by $\tilde{N}\left(t\right)$.
Since $N$ is a Gaussian martingale, $\tilde{N}$ is a Gaussian martingale
as well. We may assume that $M$ and $\tilde{N}$ are adapted to independent
filtrations $\left(\mathcal{F}_{t}\right)$ and $\left(\mathcal{G}_{t}\right)$,
respectively. Consider a two-parametric process\[
F\left(t,s\right):=f\left(M\left(t\right)+\tilde{N}\left(s\right)\right).\]
Using Itô's formula w.r.t. $t$ and $s$ separately and taking expectations,
we obtain%
\footnote{Note that since $\left(\mathcal{F}_{t}\right)$ and $\left(\mathcal{G}_{s}\right)$
are independent, by fixing one parameter we obtain (conditionally)
a semimartingale w.r.t. the other one. Thus one-parametric stochastic
calculus is applicable.%
}:\[
\frac{\partial}{\partial t}\E F\left(t,s\right)=\frac{1}{2}\sum_{i,j}\E\partial_{ij}f\left(M\left(t\right)+\tilde{N}\left(s\right)\right)K_{M}^{ij}\left(t\right),\]
\[
\frac{\partial}{\partial s}\E F\left(t,s\right)=\frac{1}{2}\sum_{i,j}\E\partial_{ij}f\left(M\left(t\right)+\tilde{N}\left(s\right)\right)K_{N}^{ij}\left(1-s\right).\]
Therefore,\[
\frac{\partial}{\partial t}\E F\left(t,1-t\right)=\frac{1}{2}\sum_{i,j}\E\partial_{ij}f\left(M\left(t\right)+\tilde{N}\left(1-t\right)\right)\left(K_{M}^{ij}\left(t\right)-K_{N}^{ij}\left(t\right)\right).\]
Finally, by integrating over $t\in\left[0,1\right]$ we finish the
proof.
\end{proof}
Now suppose that we are given an inequality between $K_{M}^{ij}$
and $K_{N}^{ij}$. It is then clear that by means of Lemma \ref{lem:Interpolation}
we may obtain an inequality between $\E f\left(M\left(1\right)\right)$
and $\E f\left(N\left(1\right)\right)$ for an appropriate class of
functions.
\begin{thm}
[Martingale comparison] Let $M$ and $N$ be a martingale and a Gaussian
martingale with absolutely continuous quadratic variation, and let
$f:\mathbb{R}^{d}\to\mathbb{R}$ be a Borel function of at most exponential
growth. Assume that the following inequalities hold%
\footnote{Derivatives of $f$ are understood in the sense of Schwartz distributions.%
}: \[
K_{M}^{ii}+K_{M}^{jj}-2K_{M}^{ij}\le K_{N}^{ii}+K_{N}^{jj}-2K_{N}^{ij},i\neq j,\]
\[
K_{M}^{ii}\le K_{N}^{ii},\]
\begin{equation}
\partial_{ij}f\le0,i\neq j.\label{eq:Submodularity}\end{equation}
Furthermore, assume that either one of the following additional conditions
is fulfilled:
\begin{enumerate}
\item \[
K_{M}^{ii}=K_{N}^{ii}\]

\item \begin{equation}
\sum_{j}\partial_{ij}f\ge0\text{ for each }i\label{eq:Convexity}\end{equation}

\end{enumerate}
Then\[
\E f\left(M\left(1\right)\right)\le\E f\left(N\left(1\right)\right).\]
\label{thm:Comparison}\end{thm}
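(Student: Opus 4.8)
The plan is to feed the stated hypotheses into the interpolation identity of Lemma~\ref{lem:Interpolation} and to check that its right-hand side is nonpositive, the whole matter coming down to an elementary inequality between symmetric matrices. I would begin with two reductions. First, since the conclusion involves only the marginal laws of $M(1)$ and $N(1)$ separately, I may place $M$ and an independent copy of $N$ on a common product space, so that the independence hypothesis of Lemma~\ref{lem:Interpolation} is satisfied. Second, the bound $K_M^{ii}\le K_N^{ii}$ gives a deterministic bound $\langle M^i\rangle_1\le\langle N^i\rangle_1<+\infty$, so each $M^i(1)$ is sub-Gaussian with a deterministic variance bound, whence $\E\exp\lambda\|M(1)\|<+\infty$ and $f(M(1))$, $f(N(1))$ are integrable for $f$ of at most exponential growth. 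Next I would reduce to smooth $f$ by mollification: $f_\varepsilon:=f*\rho_\varepsilon$ with a nonnegative compactly supported $\rho_\varepsilon$ is $C^\infty$ with all derivatives of at most exponential growth (so Lemma~\ref{lem:Interpolation} applies to it), and the distributional sign conditions survive the smoothing — if $\partial_{ij}f$ is a nonpositive measure then $\partial_{ij}f_\varepsilon=(\partial_{ij}f)*\rho_\varepsilon\le0$ as a function, and similarly $\sum_j\partial_{ij}f_\varepsilon\ge0$ under \eqref{eq:Convexity}. Granting $\E f_\varepsilon(M(1))\le\E f_\varepsilon(N(1))$ for every $\varepsilon$, I let $\varepsilon\to0$, using $f_\varepsilon\to f$ a.e.\ together with the exponential-growth domination; this is immediate when $f$ is continuous (the case of every application in this paper) and, for general Borel $f$, needs absolute continuity of the laws of $M(1),N(1)$, which one may either assume or engineer by adding a small independent Gaussian. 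Alternatively one keeps $f$ distributional and, inside \eqref{eq:Interpolation}, conditions on the filtration of $M$ to replace $\partial_{ij}f(M(t)+N(1)-N(t))$ by $\partial_{ij}(f*\gamma_t)(M(t))$, where $\gamma_t$ is the law of $N(1)-N(t)$ — again a smooth function with the same sign properties — which bypasses any approximation of the final expectations.

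So assume $f$ is smooth. By Lemma~\ref{lem:Interpolation} it suffices to show that for a.e.\ $t$ the integrand there is $\le0$, and since $K_N$ is deterministic while $K_M$ and $z:=M(t)+N(1)-N(t)$ live on the same space, this follows from a pointwise matrix inequality applied with $H=(\partial_{ij}f(z))$ and $\Delta=K_M(t)-K_N(t)$. The inequality is: if $H$ is symmetric with $H_{ij}\le0$ for $i\ne j$ (and, in case~2, $\sum_jH_{ij}\ge0$ for each $i$), and $\Delta$ is symmetric with $\Delta_{ii}+\Delta_{jj}-2\Delta_{ij}\le0$ for $i\ne j$ and $\Delta_{ii}\le0$ (respectively $\Delta_{ii}=0$ in case~1), then $\sum_{i,j}H_{ij}\Delta_{ij}\le0$; here the hypotheses on $H$ are exactly \eqref{eq:Submodularity} and \eqref{eq:Convexity}, and those on $\Delta$ are exactly $K_M^{ii}+K_M^{jj}-2K_M^{ij}\le K_N^{ii}+K_N^{jj}-2K_N^{ij}$ and $K_M^{ii}\le K_N^{ii}$ (resp.\ $K_M^{ii}=K_N^{ii}$). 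To prove it, set $c_{ij}:=\Delta_{ii}+\Delta_{jj}-2\Delta_{ij}\le0$, so $\Delta_{ij}=\tfrac12(\Delta_{ii}+\Delta_{jj})-\tfrac12 c_{ij}$; substituting and using symmetry of $H$ to relabel indices,
\[
\sum_{i,j}H_{ij}\Delta_{ij}=\sum_i\Delta_{ii}\sum_jH_{ij}-\tfrac12\sum_{i\ne j}H_{ij}c_{ij}.
\]
The last sum is a sum of products of two nonpositive numbers, hence $\ge0$, so that term contributes a nonpositive amount; the first sum vanishes in case~1 (where $\Delta_{ii}=0$) and is nonpositive in case~2 (where $\Delta_{ii}\le0$ and $\sum_jH_{ij}\ge0$). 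Thus $\sum_{i,j}H_{ij}\Delta_{ij}\le0$, the integrand in \eqref{eq:Interpolation} is $\le0$ for a.e.\ $t$, and integrating gives $\E f(M(1))\le\E f(N(1))$.

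Finally, applying this to $f_\varepsilon$ and letting $\varepsilon\to0$ as in the first paragraph completes the argument; the Fubini interchange used to integrate \eqref{eq:Interpolation} termwise is justified by the exponential bound on the second derivatives of $f_\varepsilon$ and the deterministic bound on the quadratic variations. The conceptual core is the matrix computation, which is short; the only step I expect to demand real care is the passage from smooth to Borel $f$, because the sign conditions genuinely allow discontinuous $f$ — for instance $-\I\{x^i>0\text{ for all }i\}$, whose only nonvanishing second-order distributional derivatives are nonpositive off-diagonal deltas — so one must either assume, or arrange, absolute continuity of the laws of $M(1)$ and $N(1)$, or else use the conditioning device indicated above.
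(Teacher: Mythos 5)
Your proposal follows the paper's own proof essentially verbatim: reduce to the independent case, apply Lemma~\ref{lem:Interpolation}, and show the integrand is pointwise nonpositive via the same algebraic decomposition — your $\sum_i\Delta_{ii}\sum_j H_{ij}-\tfrac12\sum_{i\ne j}H_{ij}c_{ij}$ is precisely the paper's split into $\sum_{i<j}\partial_{ij}f\cdot\bigl[(2K_M^{ij}-K_M^{ii}-K_M^{jj})-(2K_N^{ij}-K_N^{ii}-K_N^{jj})\bigr]$ plus $\sum_i(\sum_j\partial_{ij}f)(K_M^{ii}-K_N^{ii})$ — and then mollify for nonsmooth $f$. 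Your treatment of the mollification step is in fact more careful than the paper's, which only cites $L^1$-convergence over a Gaussian measure (covering the $N(1)$ side but not $M(1)$, whose law need not be absolutely continuous); your observation that one needs either continuity of $f$, absolute continuity of the law of $M(1)$, or the conditioning device replacing $\partial_{ij}f(M(t)+N(1)-N(t))$ by $\partial_{ij}(f*\gamma_t)(M(t))$ is a genuine point the paper glosses over.
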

\begin{proof}
Assume that the second derivatives of $f$ are continuous and of at
most exponential growth. Then by Lemma \ref{lem:Interpolation} we
have\[
\E f\left(M\left(1\right)\right)-\E f\left(N\left(1\right)\right)=\frac{1}{2}\intop_{0}^{1}\sum_{i,j}\E\partial_{ij}f\left(M\left(t\right)+N\left(1\right)-N\left(t\right)\right)\left(K_{M}^{ij}-K_{N}^{ij}\right)dt.\]
Note that in order to use Lemma \ref{lem:Interpolation} we assume
that $M$ and $N$ are independent. If they are not, we may replace
$N$ by an independent process with the same distribution.

Next we rewrite the right-hand side in the following way:\begin{multline*}
\sum_{i,j}\partial_{ij}f\cdot\left(K_{M}^{ij}-K_{N}^{ij}\right)=\\
=\sum_{i<j}\partial_{ij}f\cdot\left[\left(2K_{M}^{ij}-K_{M}^{ii}-K_{M}^{jj}\right)-\left(2K_{N}^{ij}-K_{N}^{ii}-K_{N}^{jj}\right)\right]+\\
+\sum_{i}\left(\sum_{j}\partial_{ij}f\right)\left(K_{M}^{ii}-K_{N}^{ii}\right).\end{multline*}
The conditions imposed upon $f$ and $K_{M}-K_{N}$ ensure that each
term is negative.

The case when $f$ is not smooth enough may be treated by means of
an approximation argument. Namely, let $\varphi_{\varepsilon}\in C^{\infty}\left(\mathbb{R}^{d}\to\mathbb{R}\right)$
be a nonnegative function supported on $\left\{ \left\Vert x\right\Vert \le\varepsilon\right\} $,
such that $\intop\varphi_{\varepsilon}dx=1$. Then $f\ast\varphi_{\varepsilon}$
satisfies the conditions of Lemma \ref{lem:Interpolation}, and $f\ast\varphi_{\varepsilon}$
converges to $f$ in $L^{1}$ over any Gaussian measure due to the
growth condition.\end{proof}
\begin{rem}
The basic condition \eqref{eq:Submodularity} is referred to as submodularity
or $L$-subadditivity. It is known to be equivalent to the following
inequality that involves only the lattice structure:\[
f\left(x\wedge y\right)+f\left(x\vee y\right)\le f\left(x\right)+f\left(y\right)\text{ for all }x,y\in\mathbb{R}^{d}.\]
Here $x\wedge y$ and $x\vee y$ are coordinatewise minimum and maximum,
respectively. Examples of submodular functions include $f\left(x^{1},\dots,x^{d}\right)=\varphi\left(\max_{i}x^{i}\right)$
for any increasing function $\varphi$. If $\varphi$ is also convex,
then $f$ satisfies \eqref{eq:Convexity}.\label{rem:Submodularity}
\end{rem}

\begin{rem}
It is clear that $M$ and $N$ may be exchanged, as long as integrability
issues are taken care of.%
\footnote{In the case of our interest nothing bad happens, since the martingale
is bounded.%
} Thus we also have comparison inequalities in the case when the infinitesimal
covariation of a martingale is bounded deterministically from below.
\label{rem:LowerBound}
\end{rem}
Next we present the basic result concerning concentration of measure
for Lipshitz functionals of Gaussian random vectors. What follows
is a short proof based on martingale comparison%
\footnote{Though, the comparison principle is used in the one-dimensional setting,
which is rather trivial.%
} \cite{Led}. Another approach based on the isoperimetric properties
of Gaussian measures may be found in \cite{Led,Lif}.
\begin{thm}
[The concentration principle] Let $N$ be a standard Gaussian random
vector in $\mathbb{R}^{d}$, and let $f$ be a Lipshitz function with
Lipshitz constant $L$. Then the following inequalities hold:\begin{equation}
\E\exp\lambda\left(f\left(N\right)-\E f\left(N\right)\right)\le\exp\left(\lambda^{2}L^{2}/2\right),\forall\lambda\in\mathbb{R},\label{eq:ConcentrationExp}\end{equation}
\begin{equation}
\P\left\{ f\left(N\right)-\E f\left(N\right)\ge C\right\} \le\exp\left(-C^{2}/2L^{2}\right),\forall C\ge0.\label{eq:ConcentrationTail}\end{equation}
\label{thm:Concentration}\end{thm}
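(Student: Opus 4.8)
The plan is to exhibit $f(N)$ as the terminal value of a continuous martingale whose quadratic variation has a deterministic bound, and then invoke Theorem~\ref{thm:Comparison} in the (essentially trivial) one-dimensional case. Since the test function $x\mapsto e^{\lambda x}$ is convex for every real $\lambda$, no monotonicity or sign restriction on $\lambda$ will be needed. A preliminary mollification (convolve $f$ with a smooth bump $\varphi_{\varepsilon}$: this preserves the Lipshitz constant, $f\ast\varphi_{\varepsilon}\to f$ uniformly, and $\E e^{\lambda f(N)}<+\infty$ because $|f(x)|\le|f(0)|+L\|x\|$) lets us assume that $f$ is smooth with $\|\nabla f\|_{\infty}\le L$, and pass to the limit at the end.

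First I would fix a standard $d$-dimensional Brownian motion $(W_{t})_{t\in[0,1]}$ with its natural filtration $(\mathcal F_{t})$, so that $W_{1}\overset{d}{=}N$, and set $M_{t}:=\Ec{f(W_{1})}{\mathcal F_{t}}$, whence $M_{0}=\E f(N)$ and $M_{1}=f(W_{1})\overset{d}{=}f(N)$. The Markov property gives $M_{t}=u(t,W_{t})$ with $u(t,x):=(P_{1-t}f)(x)$, $(P_{s})_{s\ge0}$ the heat semigroup; since $u$ solves the backward heat equation, Itô's formula reduces to $dM_{t}=\nabla u(t,W_{t})\cdot dW_{t}$. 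The decisive point is that the heat semigroup contracts the Lipshitz seminorm: $|P_{s}f(x)-P_{s}f(y)|=|\E[f(x+\sqrt{s}\,Z)-f(y+\sqrt{s}\,Z)]|\le L\|x-y\|$ for a standard Gaussian $Z$, hence $\|\nabla u(t,\cdot)\|_{\infty}\le L$ for all $t$. Therefore $\tilde M_{t}:=M_{t}-M_{0}$ is a continuous martingale with $\tilde M_{0}=0$ and $\tfrac{d}{dt}\langle\tilde M\rangle_{t}=|\nabla u(t,W_{t})|^{2}\le L^{2}$ deterministically.

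Next I would compare $\tilde M$ with the Gaussian martingale $N_{t}:=LB_{t}$, $B$ a standard real Brownian motion, via Theorem~\ref{thm:Comparison} with $d=1$ and $g(x)=e^{\lambda x}$: the off-diagonal submodularity hypotheses are vacuous in dimension one, $K_{\tilde M}=|\nabla u(t,W_{t})|^{2}\le L^{2}=K_{N}$, and convexity of $g$ is precisely condition~\eqref{eq:Convexity} in dimension one (cf.\ Remark~\ref{rem:Submodularity}). Integrability is harmless since $\langle\tilde M\rangle$ is bounded, so $\mathcal E(\lambda\tilde M)$ is a supermartingale and $\E e^{\lambda\tilde M_{1}}<+\infty$. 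The comparison then yields
\[
\E e^{\lambda(f(N)-\E f(N))}=\E e^{\lambda\tilde M_{1}}\le\E e^{\lambda N_{1}}=e^{\lambda^{2}L^{2}/2},
\]
which is \eqref{eq:ConcentrationExp}. Finally \eqref{eq:ConcentrationTail} is the Chernoff bound: $\P\left\{ f(N)-\E f(N)\ge C\right\} \le e^{-\lambda C}\E e^{\lambda(f(N)-\E f(N))}\le e^{-\lambda C+\lambda^{2}L^{2}/2}$, minimized at $\lambda=C/L^{2}$.

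The step I expect to require the most care — though it is classical — is the gradient estimate $\|\nabla u(t,\cdot)\|_{\infty}\le L$ for $t<1$ together with the clean Itô representation $dM_{t}=\nabla u(t,W_{t})\cdot dW_{t}$, i.e.\ identifying the conditional-expectation martingale with $(P_{1-t}f)(W_{t})$ and controlling the passage to $t=1$; the mollification is there precisely to make the differentiations legitimate. Everything after that is one-dimensional comparison plus a Chernoff estimate.
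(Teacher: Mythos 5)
Your proof follows essentially the same route as the paper's: both represent $f(N)$ as the terminal value of the Brownian conditional-expectation martingale $M_t=(P_{1-t}f)(W_t)$, bound its quadratic variation by $L^2\,dt$ via the Lipshitz contractivity of the heat semigroup, apply the one-dimensional case of Theorem~\ref{thm:Comparison} with the convex test function $x\mapsto e^{\lambda x}$, and close with a Chernoff bound. You are somewhat more explicit than the paper about the preliminary mollification and about verifying the hypotheses of Theorem~\ref{thm:Comparison} (vacuity of the off-diagonal conditions in dimension one, convexity of $e^{\lambda x}$ giving condition~\eqref{eq:Convexity}), but these are details the paper leaves implicit rather than a difference in method.
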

\begin{proof}
Let $\left(N\left(t\right),0\le t\le1\right)$ be a standard Brownian
motion in $\mathbb{R}^{d}$ with $N=N\left(1\right)$. Denote by $\mathcal{F}_{t}$
the induced filtration. We consider the martingale \[
\Phi\left(t\right):=\Ec{f\left(N\right)}{\mathcal{F}_{t}}\]
and intend to prove that\begin{equation}
d\left\langle \Phi,\Phi\right\rangle \le L^{2}dt.\label{eq:dPhi}\end{equation}
By an application of Theorem \ref{thm:Comparison} to $\Phi-\E f\left(N\right)$
and the Brownian motion in $\mathbb{R}$ with quadratic variation
$L^{2}t$, this would imply \eqref{eq:ConcentrationExp}. To bound
the tail probability in \eqref{eq:ConcentrationTail} we may then
use the classical Chernoff bound \cite{Lug}:\begin{multline*}
\P\left\{ f\left(N\right)-\E f\left(N\right)\ge C\right\} \le\inf_{\lambda\ge0}e^{-\lambda C}\E\exp\lambda\left(f\left(N\right)-\E f\left(N\right)\right)\le\\
\le\inf_{\lambda\ge0}\exp\left(-\lambda C+\lambda^{2}L^{2}/2\right)=\exp\left(-C^{2}/2L^{2}\right).\end{multline*}
What remains is to prove \eqref{eq:dPhi}. For this we note that\[
\Ec{f\left(N\left(1\right)\right)}{\mathcal{F}_{t}}=\Ec{f\left(N\left(1\right)\right)}{N\left(t\right)}=T^{1-t}f\left(N\left(t\right)\right),\]
where $T$ is the Brownian semigroup. The stochastic differential
$dT^{1-t}f\left(N\left(t\right)\right)$ can be calculated using Itô's
formula. Note that the $dt$ terms vanish automatically since $\Phi$
is a martingale, and just the $dN$ term remains:\[
dT^{1-t}f\left(N\left(t\right)\right)=\sum_{i}T^{1-t}\partial_{i}f\left(N\left(t\right)\right)dN^{i}\left(t\right).\]
Now the Lipshitz condition implies \eqref{eq:dPhi}.\end{proof}
\begin{rem}
Of course, Theorem \ref{thm:Concentration} may be formulated for
any Gaussian random vector, not just a standard one. In this case
the Lipshitz condition is assumed w.r.t. the Euclidean metric induced
by the Gaussian measure.\end{rem}

\end{document}